\numberwithin{equation}{section}
\newtheorem{theorem}{Theorem}[section]
\newtheorem{lemma}[theorem]{Lemma}
\newtheorem{proposition}[theorem]{Proposition}
\newtheorem{definition}[theorem]{Definition}
\theoremstyle{definition}
\begin{document}
\title[Real zeros of Hurwitz-Lerch zeta functions in the interval $(-1,0)$]{Real zeros of Hurwitz-Lerch zeta functions \\in the interval ${\bm{(-1,0)}}$}
\author[T.~Nakamura]{Takashi Nakamura}
\address[T.~Nakamura]{Department of Liberal Arts, Faculty of Science and Technology, Tokyo University of Science, 2641 Yamazaki, Noda-shi, Chiba-ken, 278-8510, Japan}
\email{nakamuratakashi@rs.tus.ac.jp}
\urladdr{https://sites.google.com/site/takashinakamurazeta/}
\subjclass[2010]{Primary 11M35}
\keywords{Real zeros of Hurwitz-Lerch zeta function}
\maketitle

\begin{abstract}
For $0 < a \le 1$, $s,z \in {\mathbb{C}}$ and $0 < |z|\le 1$, the Hurwitz-Lerch zeta function is defined by $\Phi (s,a,z) := \sum_{n=0}^\infty z^n(n+a)^{-s}$ when $\sigma :=\Re (s) >1$.
In this paper, we show that $\Phi (\sigma,a,z) \ne 0$ when $\sigma \in (-1,0)$ if and only if [I] $z=1$ and $(3-\sqrt{3}) /6 \le a  \le 1/2$ or $(3+\sqrt{3}) /6 \le a  \le 1$, [II] $z \in [-1,1)$ and $(1-z)(1-a) \le 1$, [III] $z \not \in {\mathbb{R}}$ and $0<a \le 1$. In addition, we give a new proof of the functional equation of $\Phi (s,a,z)$. 
\end{abstract}

\section{Introduction and statement of main results}

\subsection{Main results}
The Hurwitz-Lerch zeta function is defined by as follows.
\begin{definition}[see {\cite[p.~53, (1)]{Er}}]\label{def:ler}
Let $0 < a \le 1$, $s,z \in {\mathbb{C}}$ and $0< |z|\le 1$. Then the Hurwitz-Lerch zeta function $\Phi(s,a,z)$ is defined by
\begin{equation}
\Phi (s,a,z) := \sum_{n=0}^{\infty} \frac{z^n}{(n+a)^s}, \qquad s := \sigma + it, \quad \sigma >1 , \quad t \in {\mathbb{R}}.
\end{equation}
\end{definition}
We can easily see that the Riemann zeta function $\zeta (s)$ and the Hurwitz zeta function $\zeta (s,a)$ are expressed as $\Phi (s,1,1)$ and $\Phi (s,a,1)$, respectively. The Dirichlet series of $\Phi(s,a,z)$ converges absolutely in the half-plane $\sigma >1$ and uniformly in each compact subset of this half-plane. When $z \ne 1$, the function $\Phi(s,a,z)$ is analytically continuable to the whole complex plane. However, the Hurwitz zeta function $\zeta (s,a)$ is a meromorphic function with a simple pole at $s=1$. In this paper, we show the following.
\begin{theorem}\label{th:hlz1}
Let $b_2^\pm := (3\pm \sqrt{3}) /6$. Then the Hurwitz-Lerch zeta function $\Phi (\sigma,a,z)$ does not vanish when $\sigma \in (-1,0)$ if and only if
$$
\left\{\begin{matrix} 
{\rm{[I]}} &  z=1, &  b_2^-\le a  \le 1/2 \,\,\, {\rm{or}} \,\,\, b_2^+ \le a  \le 1,\\
{\rm{[II]}} &  z \in [-1,1), & (1-z)(1-a) \le 1,\\
{\rm{[III]}} & \, |z|=1, \,\, z \not \in {\mathbb{R}}, & 0<a \le 1.
\end{matrix} \right.
$$
Note that $b_2^\pm$ are the roots of the second Bernoulli polynomial $B_2 (x) := x^2-x+1/6$.
\end{theorem}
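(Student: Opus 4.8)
The plan is to reduce the whole statement to a sign analysis at the two endpoints $\sigma=0$ and $\sigma=-1$, and then to control the behaviour in between. For $z\neq 1$ one computes by analytic continuation the special values
$$
\Phi(0,a,z)=\frac{1}{1-z},\qquad \Phi(-1,a,z)=\frac{a+z(1-a)}{(1-z)^2},
$$
while for $z=1$ one has $\zeta(0,a)=\tfrac12-a$ and $\zeta(-1,a)=-\tfrac12 B_2(a)$. The elementary identity $(1-z)(1-a)\le 1\iff a+z(1-a)\ge 0$ shows that condition [II] is exactly $\Phi(-1,a,z)\ge 0$, and since $\Phi(0,a,z)=1/(1-z)>0$ for real $z<1$, [II] says precisely that the two endpoint values have the same (weak) sign; likewise, reading off the signs of $\tfrac12-a$ and $-\tfrac12 B_2(a)$ against the roots $b_2^\pm$ of $B_2$ shows that [I] is exactly the set of $a$ for which $\zeta(0,a)$ and $\zeta(-1,a)$ are both $\ge 0$ or both $\le 0$. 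Thus in the real cases [I] and [II] the theorem asserts: \emph{$\Phi(\sigma,a,z)$ is zero-free on $(-1,0)$ iff its values at the endpoints have the same sign.}

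First I would dispose of the easy implication. When $\sigma\mapsto\Phi(\sigma,a,z)$ is real-valued on $[-1,0]$ (i.e. $z$ real, with $z=1$ using that the only pole of $\zeta(s,a)$ is at $s=1$, so $\zeta$ is continuous there), and the endpoint values are strictly of opposite sign, the intermediate value theorem forces a zero in the open interval $(-1,0)$. This already yields the ``only if'' direction away from the boundary equalities, and in particular produces zeros for $a\in(0,b_2^-)\cup(1/2,b_2^+)$ when $z=1$ and for $(1-z)(1-a)>1$ when $z\in[-1,1)$.

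The substance of the proof is the converse: if the endpoint values share a sign, there is no zero in $(-1,0)$. For this I would use an integral representation valid throughout $-1<\sigma<0$ --- Hermite's formula for $\zeta(\sigma,a)$ in case [I], and the Lerch analogue (which I would derive en route to the new proof of the functional equation) for general real $z$ in case [II] --- writing $\Phi(\sigma,a,z)$ as an explicit elementary part plus a single integral. The key point is that for $|\sigma|<1$ the relevant kernel stays within a half-period (for instance $\sin(\sigma\arctan(t/a))$ keeps a fixed sign because $|\sigma\arctan(t/a)|<\pi/2$), which should let me bound the number of sign changes of $\Phi(\cdot,a,z)$ on $(-1,0)$ by one; combined with the equal-sign hypothesis at the endpoints, a single possible (sign-changing) zero is then excluded, giving zero-freeness. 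That $b_2^\pm$ enter is no accident: they are forced by $\zeta(-1,a)=-\tfrac12 B_2(a)$.

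I expect this last step --- extracting a definite sign, or an ``at most one sign change'' bound, from an integrand that is \emph{not} globally sign-definite (it involves $\overline{B_2}$ or an oscillatory trigonometric factor) --- to be the main obstacle, requiring integration by parts and splitting of the range rather than a one-line positivity argument. Finally, for the genuinely complex case [III] I would argue that the curve $\sigma\mapsto\Phi(\sigma,a,e^{2\pi i\lambda})$, $\lambda\in(0,1)\setminus\{1/2\}$, cannot pass through the origin: via the functional equation, $\Phi(\sigma,a,z)$ with $\sigma\in(-1,0)$ becomes a combination of two Lerch series at $1-\sigma\in(1,2)$ (where the defining series converge) carrying opposite phase factors $e^{\pm i\pi(1-\sigma)/2}$; since their arguments differ by $\pi(1-\sigma)\in(\pi,2\pi)$, a non-cancellation (dominant-term) estimate should rule out a vanishing sum for every $a\in(0,1]$, which is why [III] is unconditional.
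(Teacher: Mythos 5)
Your reduction of the real cases to the endpoint values $\Phi(0,a,z)=1/(1-z)$, $\Phi(-1,a,z)=(a+z(1-a))/(1-z)^2$, $\zeta(0,a)=1/2-a$, $\zeta(-1,a)=-B_2(a)/2$, and the intermediate value theorem correctly settles the ``only if'' direction, and this is exactly what the paper does for that half. The problem is the converse, which is the substance of the theorem, and there your proposal has a genuine gap: the mechanism you propose --- Hermite's formula plus an ``at most one sign change on $(-1,0)$'' bound --- is neither established nor sufficient. You explicitly flag the extraction of a sign (or a sign-change bound) from a non-sign-definite kernel as ``the main obstacle,'' which means the key step is missing; and even if you had it, ``at most one sign change'' together with equal endpoint signs would only exclude sign-changing zeros, not a zero of even order where $\Phi(\cdot,a,z)$ touches the axis. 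The paper avoids this entirely by proving that $\Phi(\sigma,a,z)$ is itself sign-definite on all of $(-1,0)$: it writes $\Gamma(s)\zeta(s,a)=\int_0^\infty G(a,x)x^{s-1}dx$ with $G(a,x)=e^{(1-a)x}/(e^x-1)-1/x-1/2+a$ and shows by elementary calculus on $g(a,x)=x(e^x-1)G(a,x)$ (three derivatives vanish at $0$, then a Taylor-coefficient comparison for $e^{ax}$) that $G(a,\cdot)$ has a fixed sign on $(0,\infty)$ \emph{precisely} when $b_2^-\le a\le 1/2$ (negative) or $a\ge b_2^+$ (positive); since $\Gamma(\sigma)<0$ on $(-1,0)$, this gives $\zeta(\sigma,a)>0$ resp.\ $<0$ throughout the interval. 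Case [II] is handled the same way with the kernel $e^{(1-a)x}/(e^x-z)-1/(1-z)$, whose numerator is monotone decreasing exactly when $(1-a)(1-z)\le 1$.

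For [III] your argument is also incomplete: the two series in the functional equation at $1-\sigma\in(1,2)$ carry the phases $e^{\pm i\pi s/2}$ \emph{and} their own arguments, so the total arguments do not simply differ by $\pi(1-\sigma)$, and a dominant-term estimate uniform in $a\in(0,1]$, $|z|=1$, and $\sigma$ near $0$ (where the series barely converge) is not supplied. The paper instead shows directly that $\Im\bigl(G_{r,\theta}(a,x)\bigr)/\sin\theta<0$ for every $x>0$ by comparing the three quantities $e^{(1-a)x}(1+r^2-2r\cos\theta)$, $e^{2(1-a)x}+r^2-2e^{(1-a)x}r\cos\theta$ and $e^{2x}+r^2-2e^xr\cos\theta$, whence $\Im\Phi(\sigma,a,z)\ne 0$ on the whole strip $-1<\Re(s)<0$. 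In short: your endpoint analysis and the role of $b_2^\pm$ are exactly right, but the zero-freeness direction needs a pointwise sign-definite integrand (or some equally concrete substitute), not a sign-change count that is left unproved.
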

In Section 2, we prove Theorem \ref{th:hlz1}. During the proof process of Theorem \ref{th:hlz1} (see Proposition \ref{pro:defi1}), we show $\zeta (\sigma,a) > 0$ when $b_2^- \le a \le 1/2$ and $\zeta (\sigma,a) < 0$ when $b_2^+ \le a \le 1$ in the interval $(-1,0)$. In Section 3, we give a new proof of the functional equations (\ref{eq:hueq1}) and (\ref{eq:hueq2}) by using the integral representations (\ref{eq:irhur1}) and (\ref{eq:hlzinrp2}), and modifying the fifth method of the proof of the functional equation for the Riemann zeta function (see \cite[Section 2.8]{Tit}). It should be noted that in the 21st century, Knopp and Robins \cite{KR}, and Navas, Ruiz and Varona \cite{NRV} gave new proofs of the functional equation for $\zeta (s,a)$ by using Poisson summation of the Lipschitz summation formula (see \cite[p.~1916]{KR}) and the uniqueness of Fourier coefficients (see \cite[p.~190]{NRV}), respectively.

\subsection{Known results}
As mentioned in \cite[Section 1]{NaC}, the study of real zeros of $\Phi (s,a,z)$ has a long, long history. Let ${\rm{Li}}_s (z) := z\Phi (s,1,z) = \sum_{n=1}^\infty z^n n^{-s}$. Roy \cite{Roy} proved that ${\rm{Li}}_\sigma (z) \ne 0$ for all $|z| \le 1$, $z \ne 1$ and $\sigma >0$. Berndt \cite{Ber} showed that $\zeta (\sigma,a+1) = \zeta (\sigma,a) - a^{-\sigma} \ne 0$ for any $0 < \sigma <1$ and $0 \le a \le 1$. 

For zeros of zeta functions in the half-plane $\sigma <0$, we have the following research. Peyerimhoff \cite{Pey} proved that for (fixed) $\sigma<0$, the function ${\rm{Li}}_\sigma (z$) has $-\lfloor \sigma \rfloor$ simple zeros for $z \in {\mathbb{C}} \setminus [1,\infty)$ and they are all non-positive (see also \cite[Section 8]{OS}). Spira \cite{Spira} showed that if $\sigma \le -4a-1-2[1-2a]$ and $|t|\le1$, then $\zeta (s,a) \ne 0$ except for zeros on the negative real line, one in each interval $(-2n-4a-1,-2n-4a+1)$, ${\mathbb{N}} \ni n \ge 1-2a$. Some similar results for the Lerch zeta function $\Phi (s,a,e^{2 \pi i \theta})$ with $0 < \theta \le 1$ are given by Garunk\v{s}tis and Laurin\v{c}ikas \cite{GauLa}. Veselov and Ward \cite{VW} proved that $\zeta(-\sigma,a)$ has no real zeros in the region $4\pi e a > 1+2\sigma + \log \sigma$ for large $\sigma$. 

For real zeros of the Hurwitz zeta function in the interval $(0,1)$, Schipani \cite{Schi} showed that $\zeta (\sigma,a)$ has no zeros when $0<\sigma <1$ and $1-\sigma \le a$. The author \cite{NaC} proved that $\zeta (\sigma,a)$ does not vanish for all $0 <\sigma <1$ if and only if $a \ge 1/2$, and $\Phi (\sigma,a,z) \ne 0$ for all $0 <\sigma <1$ and $0 < a \le 1$ when $z \ne 1$. 

\subsection{Some remarks}
The Dirichlet {\textit{L}}-function with a Dirichlet character $\chi$ is defined by $L(s,\chi) := \sum_{n=1}^\infty \chi (n) n^{-s}$. Let $\varphi$ be the Euler totient function and $\chi$ be a primitive Dirichlet character of conductor of $q$. Then the following six relations between $L(s,\chi)$, $\zeta (s,a)$ and ${\rm{Li}}_s  (z)$ are well-known;
\begin{equation*}
\begin{split}
&L (s,\chi) = q^{-s} \sum_{r=1}^q \chi (r) \zeta (s,r/q) ,\qquad
\zeta (s,r/q) = 
\frac{q^s}{\varphi (q)} \sum_{\chi \!\!\! \mod q} \overline{\chi} (r) L(s,\chi) , \\
&\zeta (s,r/q) = \sum_{n=1}^q e^{-2\pi i rn/q} {\rm{Li}}_s  (e^{2\pi i rn/q}), \qquad
{\rm{Li}}_s  (e^{2\pi i r/q}) = q^{-s} \sum_{n=1}^q e^{2\pi irn/q} \zeta (s,n/q), \\
&L (s,\chi) = 
\frac{1}{G(\overline{\chi})} \sum_{r=1}^q \overline{\chi} (r) {\rm{Li}}_s  (e^{2\pi i r/q}) ,\qquad
{\rm{Li}}_s  (e^{2\pi i r/q}) =
\frac{1}{\varphi (q)} \sum_{\chi \!\!\! \mod q} G(\overline{\chi}) L(s,\chi) ,
\end{split}
\end{equation*}
where $G(\overline{\chi}) := \sum_{n=1}^q \overline{\chi}(n)e^{2\pi irn/q}$ denotes the Gauss sum associated to $\overline{\chi}$. 

Siegel \cite{Sie} proved that for any $\varepsilon >0$, there exists $C_\varepsilon>0$ such that, if $\chi$ is a real primitive Dirichlet character modulo $q$, then $L(1,\chi) > C_\varepsilon q^{-\varepsilon}$. It is considered likely that $L(\sigma,\chi) \ne 0$ for all $0 <\sigma <1$, namely, so-called Siegel zeros do not exist. From the Euler product and the functional equation of the Dirichlet {\textit{L}}-function, $L(\sigma,\chi)$ does not vanish for $\sigma \in (-1,0)$. This fact should be compared with the following deduced by Theorem \ref{th:hlz1} that $\zeta (\sigma,a) \ne 0$ for all $\sigma \in (-1,0)$ if and only if $(3-\sqrt{3}) /6 \le a  \le 1/2$ or $(3+\sqrt{3}) /6 \le a  \le 1$ and ${\rm{Li}}_\sigma (z) \ne 0$ for all $z \ne 1$, $|z| =1$ and $\sigma \in (-1,0)$. 

Taking known results and Theorem \ref{th:hlz1} altogether, we have the following.
$$
\begin{matrix} 
{} & -1 < \sigma <0 & 0<\sigma<1 & 1 <\sigma \\
\zeta (\sigma, a) \ne 0 & b_2^-\le a  \le 1/2, \,\, b_2^+ \le a \le 1 & a \ge 1/2 & 0 <a \le 1\\
{\rm{Li}}_\sigma (z) \ne 0, \,\, z \ne 1 &  z \ne 1 & z \ne 1 & z \ne 1 \\
L(\sigma,\chi)\ne 0, \mbox{ real $\chi$} & \quad \mbox{real } \chi \quad & \mbox{ Siegel zero } & \mbox{real } \chi
\end{matrix} 
$$

\section{Proof of Theorem \ref{th:hlz1}}
\subsection{Proof of Theorem \ref{th:hlz1} (I)}
To prove (I) of Theorem \ref{th:hlz1}, we define $H(a,x)$ by
\begin{equation}
\label{eq:defHax}
H(a,x) := \frac{e^{(1-a)x}}{e^x-1} - \frac{1}{x} =  \frac{xe^{(1-a)x} - e^x +1}{x(e^x-1)}, \qquad x>0.
\end{equation}
We have to mention that for $|x|\le 1$,
\begin{equation}
\label{eq:haxtay}
H(a,x) = \frac{1}{x} \sum_{n=1}^\infty B_n(1\!-\!a) \frac{x^n}{n!} = \frac{1}{2} -a + \frac{B_2(a)}{2!}x+\cdots,
\end{equation}
where $B_n (a)$ is the $n$-th Bernoulli polynomial (see for example \cite[p.~246]{Apo} or \cite[Proposition 4.9]{AIK}). We quote the following lemma from \cite{NaC}.
\begin{lemma}[{see \cite[Lemma 2.1]{NaC}}]
\label{lem:12.2ac}
For $0 < \sigma <1$, 
\begin{equation}
\label{eq:gamhurac}
\Gamma (s) \zeta (s,a) =  \int_0^\infty \biggl( \frac{e^{(1-a)x}}{e^x-1} - \frac{1}{x} \biggr) x^{s-1} dx.
\end{equation}
\end{lemma}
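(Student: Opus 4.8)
The plan is to start from the Mellin-type representation valid in the region of absolute convergence and then push it into the strip $0<\sigma<1$ by isolating the singular part $1/x$. First I would record the elementary identity $\Gamma(s)(n+a)^{-s}=\int_0^\infty e^{-(n+a)x}x^{s-1}\,dx$, sum over $n\ge 0$, and interchange sum and integral, which is justified for $\sigma>1$ by absolute convergence (Tonelli). Since $\sum_{n\ge 0}e^{-(n+a)x}=e^{-ax}/(1-e^{-x})=e^{(1-a)x}/(e^x-1)$, this gives
\[
\Gamma(s)\zeta(s,a)=\int_0^\infty \frac{e^{(1-a)x}}{e^x-1}\,x^{s-1}\,dx,\qquad \sigma>1.
\]
This representation breaks down once $\sigma\le 1$ precisely because of the behaviour $e^{(1-a)x}/(e^x-1)\sim 1/x$ as $x\to 0^+$, which is exactly the term subtracted off in the definition (\ref{eq:defHax}) of $H(a,x)$.

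Next I would analyse convergence of the subtracted integral $I(s):=\int_0^\infty H(a,x)x^{s-1}\,dx$. Near $x=0$ the Taylor expansion (\ref{eq:haxtay}) gives $H(a,x)=(1/2-a)+O(x)$, so $H(a,x)x^{s-1}$ is integrable on $(0,1)$ for $\sigma>0$; near $x=\infty$ the exponential term is negligible (here $a>0$ is used), so $H(a,x)=-1/x+O(e^{-ax})$ and $H(a,x)x^{s-1}=O(x^{\sigma-2})$, integrable on $(1,\infty)$ for $\sigma<1$. Hence $I(s)$ is absolutely convergent and holomorphic on the strip $0<\sigma<1$, which is the domain in which the identity is asserted.

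The main step is to identify $I(s)$ with $\Gamma(s)\zeta(s,a)$, and the one place where genuine care is needed is the treatment of the singular piece: the integral $\int_0^\infty x^{s-2}\,dx$ converges for no value of $s$, so one cannot subtract $1/x$ globally. Instead I would split the range at $x=1$. On $(0,1)$ one has $\int_0^1 x^{s-2}\,dx=1/(s-1)$ for $\sigma>1$, while on $(1,\infty)$ one has $\int_1^\infty x^{s-2}\,dx=-1/(s-1)$ for $\sigma<1$; these two evaluations converge on complementary half-planes but are given by the same meromorphic expression $\pm 1/(s-1)$. Writing, for $\sigma>1$,
\[
\Gamma(s)\zeta(s,a)=\int_0^1 H(a,x)\,x^{s-1}\,dx+\frac{1}{s-1}+J(s),\qquad J(s):=\int_1^\infty \frac{e^{(1-a)x}}{e^x-1}\,x^{s-1}\,dx,
\]
where $J(s)$ is entire in $s$ by the exponential decay of the integrand, exhibits the meromorphic continuation of $\Gamma(s)\zeta(s,a)$ to $\sigma>0$.

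Finally I would reassemble for $0<\sigma<1$. In that strip one has $J(s)=\int_1^\infty H(a,x)x^{s-1}\,dx-\int_1^\infty x^{s-2}\,dx=\int_1^\infty H(a,x)x^{s-1}\,dx-1/(s-1)$. Substituting this back into the displayed formula, the two occurrences of $\pm 1/(s-1)$ cancel and the right-hand side collapses to $\int_0^1 H(a,x)x^{s-1}\,dx+\int_1^\infty H(a,x)x^{s-1}\,dx=I(s)$, which is the claimed identity (\ref{eq:gamhurac}). The heart of the argument is exactly this cancellation of the pole at $s=1$, produced by the opposite-signed boundary contributions of $\int x^{s-2}\,dx$ on $(0,1)$ and $(1,\infty)$; once it is in place, the equality holds throughout $0<\sigma<1$, and indeed on all of $\sigma>0$ (with the pole at $s=1$) by the identity theorem.
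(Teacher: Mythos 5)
Your argument is correct and is essentially the standard one: the paper itself quotes this lemma from \cite{NaC} without proof, but the pole-cancellation device you use (splitting at $x=1$ and exploiting that $\int_0^1 x^{s-2}\,dx$ and $\int_1^\infty x^{s-2}\,dx$ evaluate to $\pm 1/(s-1)$ on complementary half-planes) is exactly the technique the paper applies one strip lower in the proof of Proposition \ref{pro:irhur1}, where the constant $1/2-a$ is subtracted and $1/s=-\int_1^\infty x^{s-1}\,dx$ plays the same role. The only slip is your closing parenthetical: the combined integral (\ref{eq:gamhurac}) diverges at infinity for $\sigma\ge 1$ because $H(a,x)\sim -1/x$ there, so the identity in that single-integral form is confined to $0<\sigma<1$, even though your split representation does continue meromorphically to all of $\sigma>0$.
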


When $-1 < \sigma < 0$, we have the following integral representation.
\begin{proposition}[{see \cite[(2.4)]{NRV}}]\label{pro:irhur1}
For $-1 < \sigma <0$, it holds that
\begin{equation}\label{eq:irhur1}
\Gamma (s) \zeta (s,a) = \int_0^\infty 
\biggl( \frac{e^{(1-a)x}}{e^x-1} - \frac{1}{x} - \frac{1}{2}+a \biggr) x^{s-1} dx .
\end{equation}
\end{proposition}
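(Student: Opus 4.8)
The plan is to derive (\ref{eq:irhur1}) from Lemma \ref{lem:12.2ac} by a regularization-and-analytic-continuation argument, exploiting the fact that the integrand in (\ref{eq:irhur1}) is precisely $H(a,x)$ minus its limiting value at the origin. Indeed, by (\ref{eq:haxtay}) one has $H(a,x)\to \tfrac12-a$ as $x\to 0^+$, so the new integrand $\tilde H(a,x):=H(a,x)-(\tfrac12-a)$ satisfies $\tilde H(a,x)=\tfrac{B_2(a)}{2}x+O(x^2)$ near $0$. First I would settle convergence of the right-hand side of (\ref{eq:irhur1}): near $x=0$ the bound $\tilde H(a,x)x^{s-1}=O(x^{\sigma})$ forces $\sigma>-1$, while as $x\to\infty$ the exponential decay of $e^{(1-a)x}/(e^x-1)$ (valid since $a>0$) gives $H(a,x)\to 0$, hence $\tilde H(a,x)\to a-\tfrac12$, so that $\tilde H(a,x)x^{s-1}=O(x^{\sigma-1})$ requires $\sigma<0$. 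Thus the integral $I(s):=\int_0^\infty \tilde H(a,x)\,x^{s-1}\,dx$ is absolutely convergent and analytic precisely on the strip $-1<\sigma<0$.

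Next I would bridge the two strips by splitting each integral at $x=1$. Writing $A(s):=\int_0^1 H(a,x)x^{s-1}\,dx$ and $B(s):=\int_1^\infty H(a,x)x^{s-1}\,dx$, Lemma \ref{lem:12.2ac} reads $\Gamma(s)\zeta(s,a)=A(s)+B(s)$ on $0<\sigma<1$, where $A$ is analytic for $\sigma>0$ and $B$ for $\sigma<1$. For $\sigma>0$, substituting $H=\tilde H+(\tfrac12-a)$ and integrating the constant gives
\begin{equation*}
A(s)=\int_0^1 \tilde H(a,x)x^{s-1}\,dx+\frac{\tfrac12-a}{s},
\end{equation*}
and since $\int_0^1 \tilde H(a,x)x^{s-1}\,dx$ converges for $\sigma>-1$, the right-hand side furnishes the meromorphic continuation of $A(s)$ to $\sigma>-1$. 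Because $B(s)$ is already analytic for $\sigma<1$, by uniqueness of analytic continuation the identity $\Gamma(s)\zeta(s,a)=\bigl[\int_0^1 \tilde H(a,x)x^{s-1}\,dx+(\tfrac12-a)/s\bigr]+B(s)$ then holds throughout $-1<\sigma<1$.

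Finally I would restrict to $-1<\sigma<0$ and unfold $B(s)$ there. On this strip both $\int_1^\infty \tilde H(a,x)x^{s-1}\,dx$ and $\int_1^\infty(\tfrac12-a)x^{s-1}\,dx=-(\tfrac12-a)/s$ converge absolutely, so $H=\tilde H+(\tfrac12-a)$ again yields $B(s)=\int_1^\infty \tilde H(a,x)x^{s-1}\,dx-(\tfrac12-a)/s$. Adding the two pieces, the two copies of $(\tfrac12-a)/s$ cancel and leave exactly $\Gamma(s)\zeta(s,a)=\int_0^\infty \tilde H(a,x)x^{s-1}\,dx=I(s)$, which is (\ref{eq:irhur1}). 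The one point deserving care—and the main obstacle—is that the relevant half-planes of convergence for the head term at $0$ ($\sigma>-1$) and the tail term at $\infty$ ($\sigma<0$) do not both overlap the strip $0<\sigma<1$, so the cancellation of the simple poles at $s=0$ cannot be exhibited on a single common domain; it must be arranged through the analytic continuation just described, and one should verify that the decomposition $H=\tilde H+(\tfrac12-a)$ may be integrated termwise on each region where all the pieces converge absolutely.
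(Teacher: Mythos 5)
Your proposal is correct and follows essentially the same route as the paper: your $\tilde H(a,x)$ is exactly the paper's $G(a,x)=H(a,x)-(\tfrac12-a)$, and both arguments split the integral at $x=1$, continue the head term to $\sigma>-1$ after subtracting the constant, and use $\int_1^\infty x^{s-1}\,dx=-1/s$ on $-1<\sigma<0$ to absorb the pole term into the tail. The only difference is presentational: you make the intermediate analytic continuation to the strip $-1<\sigma<1$ explicit before recombining, which the paper leaves implicit.
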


\begin{proof}
For reader's convenience, we give a proof here. When $0< \sigma < 1$, we have
$$
\Gamma (s) \zeta (s,a) = \int_0^\infty \!\! H(a,x) x^{s-1} dx
$$
by (\ref{eq:defHax}) and Lemma \ref{lem:12.2ac}. Hence we have
\begin{equation*}
\begin{split}
&\Gamma (s) \zeta (s,a) = 
\int_0^1 \!\! H(a,x) x^{s-1} dx + \int_1^\infty \!\! H(a,x) x^{s-1} dx \\ = &
\int_0^1 \Bigl( H(a,x) - \frac{1}{2}+a \Bigr) x^{s-1} dx + 
\Bigl( \frac{1}{2}-a \Bigr) \int_0^1 x^{s-1} dx + \int_1^\infty \!\! H (a,x) x^{s-1} dx \\ = &
\int_0^1 G(a,x) x^{s-1} dx +
\int_1^\infty \!\! H (a,x) x^{s-1} dx + \Bigl( \frac{1}{2}-a \Bigr) \frac{1}{s} ,
\end{split}
\end{equation*}
where the function $G (a,x)$ is defined as
\begin{equation}\label{eq:defg1}
G (a,x) :=  \frac{e^{(1-a)x}}{e^x-1} - \frac{1}{x} - \frac{1}{2}+a = H(a,x) - \Bigl( \frac{1}{2}-a \Bigr).
\end{equation}

For $\sigma > -1$, it holds that
$$
\int_0^1 \bigl| G(a,x) \bigr| \bigl| x^{s-1} \bigr| dx \ll \int_0^1 x^{\sigma} dx = \frac{1}{1+\sigma} < \infty 
$$
by (\ref{eq:haxtay}) and (\ref{eq:defg1}).
On the other hand, we have
\begin{equation}\label{eq:s-1}
\frac{1}{s} = - \int_1^\infty \!\! x^{s-1} dx, \qquad -1 < \sigma <0. 
\end{equation}
Furthermore, when $-1 < \sigma < 0$ one has
$$
\int_1^\infty \bigl| H(a,x) \bigr| \bigl| x^{s-1} \bigr| dx \ll
\int_1^\infty \frac{e^{(1-a)x}}{e^x-1}x^{\sigma -1} dx < \infty , \qquad 
\int_1^\infty \bigl| x^{s-1} \bigr| dx = \int_1^\infty \!\! x^{\sigma-1} dx  < \infty. 
$$
Therefore, the integral representation 
$$
\Gamma (s) \zeta (s,a) = \int_0^1 G(a,x) x^{s-1} dx +
\int_1^\infty \!\! H (a,x) x^{s-1} dx - \Bigl( \frac{1}{2}-a \Bigr) \int_1^\infty \!\! x^{s-1} dx
$$
gives an analytic continuation for $-1<\sigma <0$. Hence we obtain this Proposition. 
\end{proof}

\begin{lemma}\label{lem:negdefi}
The function $G(a,x)$ defined by (\ref{eq:defg1}) is negative for all $x>0$ if and only if $b_2^- :=(3-\sqrt{3})/6 \le a \le 1/2$. Moreover, we have $G(a,x)>0$ for all $x>0$ if and only if $a \ge (3+\sqrt{3})/6 =: b_2^+$. 
\end{lemma}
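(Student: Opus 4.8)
The plan is to build everything on the convexity of $a \mapsto G(a,x)$. Differentiating \eqref{eq:defg1} twice in $a$ gives $\partial_a^2 G(a,x) = x^2 e^{(1-a)x}/(e^x-1)$, which is positive for every $x>0$, so $a\mapsto G(a,x)$ is strictly convex. I will combine this with two pieces of boundary data: from the expansion \eqref{eq:haxtay} one reads off $G(a,0^+)=0$ with leading term $\tfrac12 B_2(a)\,x$ (using $B_2(1-a)=B_2(a)$), and writing $e^{(1-a)x}/(e^x-1)=e^{-ax}/(1-e^{-x})$ shows $\lim_{x\to\infty}G(a,x)=a-\tfrac12$ for every $a>0$. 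These two facts localize the sign of $G$ near $0$ (governed by $B_2(a)$) and near $\infty$ (governed by $a-\tfrac12$), and convexity will propagate sign information from the distinguished values $a=b_2^-,\tfrac12,b_2^+$ to whole intervals of $a$.

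The necessity (``only if'') directions follow at once from the boundary data. For the first assertion: if $a>\tfrac12$ then $G>0$ for large $x$, while if $0<a<b_2^-$ then $B_2(a)>0$ forces $G>0$ for small $x$, so negativity on all of $(0,\infty)$ forces $a\in[b_2^-,\tfrac12]$. For the second assertion: if $\tfrac12<a<b_2^+$ then $B_2(a)<0$ gives $G<0$ for small $x$, and if $0<a\le\tfrac12$ then $G(a,\infty)=a-\tfrac12\le0$ (with $G(\tfrac12,\cdot)<0$ shown below), so positivity forces $a\ge b_2^+$. For the sufficiency directions I use convexity. On $[b_2^-,\tfrac12]$ a convex function attains its maximum at an endpoint, so it is enough to prove $G(b_2^-,x)<0$ and $G(\tfrac12,x)<0$ for all $x>0$; the second is immediate since $G(\tfrac12,x)=\bigl(2\sinh(x/2)\bigr)^{-1}-x^{-1}$, which is negative precisely because $\sinh(x/2)>x/2$. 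On $[b_2^+,1]$ I instead note that $\partial_a G$ is increasing in $a$, so $G(\cdot,x)$ is nondecreasing there once $\partial_a G(b_2^+,x)=1-xe^{b_2^- x}/(e^x-1)\ge0$; writing $b_2^-=\tfrac12-c$ with $c:=\sqrt3/6$, this is $xe^{-cx}\le 2\sinh(x/2)$, which holds trivially because $xe^{-cx}<x<2\sinh(x/2)$. Thus on $[b_2^+,1]$ it suffices to prove $G(b_2^+,x)>0$ for all $x>0$.

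The two surviving boundary inequalities are unified by the reflection identity $G(1-a,-x)=-G(a,x)$, which I would check directly from \eqref{eq:defg1} using $e^{-ax}/(e^{-x}-1)=-e^{(1-a)x}/(e^x-1)$. Taking $a=b_2^-$, so that $1-a=b_2^+$, it turns $G(b_2^+,x)>0$ for $x>0$ into $G(b_2^-,-x)<0$; hence both boundary inequalities reduce to the single claim that $G(b_2^-,x)<0$ for all real $x\ne0$. Since $x(e^x-1)>0$ for every $x\ne0$, clearing denominators in $G(b_2^-,x)$ (recall $1-b_2^-=b_2^+=\tfrac12+c$ and $b_2^--\tfrac12=-c$, with $c^2=\tfrac1{12}$) and dividing by $e^{x/2}$ reduces this claim to $\psi(x)>0$ for all $x\ne0$, where $\psi(x):=2\sinh(x/2)(1+cx)-xe^{cx}$.

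Proving $\psi>0$ is the main obstacle, and it is genuinely delicate. Because $B_2(b_2^-)=0$ the would-be leading contribution cancels and $\psi$ has a fourth-order zero at the origin, $\psi(x)=\tfrac{\sqrt3}{216}x^4+\cdots$, so the inequality is tight there and no crude bound survives near $0$. The route I expect to take is to prove $\psi^{(4)}(x)=\tfrac18\sinh(x/2)(1+cx)+c\cosh(x/2)-c^3e^{cx}(4+cx)>0$ for every real $x$, and then integrate up: since $\psi(0)=\psi'(0)=\psi''(0)=\psi'''(0)=0$, this forces $\psi$ to have a strict global minimum equal to $0$ at the origin, hence $\psi>0$ for $x\ne0$. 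As corroboration, for $x\le -2\sqrt3$ the factor $1+cx$ is nonpositive, so $\psi>0$ there is immediate, which confines the real difficulty to a bounded range. The hard point throughout is the middle range of $x$, where the competing exponentials $e^{x/2}$ and $e^{cx}$ (with $0<c<\tfrac12$) are comparable and the leading Taylor terms cancel, so any successful estimate must capture the next order rather than lean on a single dominant exponential.
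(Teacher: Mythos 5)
Your reductions are correct and genuinely different from the paper's: the convexity of $a\mapsto G(a,x)$ (indeed $\partial_a^2G=x^2e^{(1-a)x}/(e^x-1)>0$), the boundary data at $x\to0^+$ and $x\to\infty$, the monotonicity of $G(\cdot,x)$ on $[b_2^+,1]$ via $\partial_aG(b_2^+,x)\ge0$, and the reflection identity $G(1-a,-x)=-G(a,x)$ all check out, and they cleanly reduce both sufficiency statements to the single claim $\psi(x):=2\sinh(x/2)(1+cx)-xe^{cx}>0$ for $x\ne0$, $c=\sqrt3/6$. The necessity arguments are also complete. But the proof has a genuine gap exactly where you say it does: the inequality $\psi>0$ --- equivalently the sign of $G$ at the borderline values $a=b_2^\pm$, which is the actual content of the lemma --- is never established. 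You state a \emph{route} (``prove $\psi^{(4)}>0$ and integrate up four times''), correctly note that $\psi$ vanishes to fourth order at $0$ so the inequality is tight, and correctly observe that the difficulty is the middle range where $e^{x/2}$ and $e^{cx}$ compete; but no argument for $\psi^{(4)}>0$ is given, and the integration scheme is worthless without it. As written, the hardest step of the lemma is deferred rather than proved.

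For comparison, the paper closes precisely this step by a device you may want to adopt: it works with the numerator $g(a,x)=x(e^x-1)G(a,x)$, uses $g(a,0)=g'(a,0)=g''(a,0)=0$, and studies $e^{(a-1)x}g''(a,x)=(1-a)^2x+2(1-a)-(1/2-a)xe^{ax}-2(1-a)e^{ax}$, which is a degree-one polynomial minus an entire function with explicitly computable Taylor coefficients. Comparing coefficients termwise, the constant terms cancel, the linear terms are ordered exactly when $3B_2(a)\le0$ (resp.\ $\ge0$), and all higher coefficients have a fixed sign for $a$ in the relevant range; integrating twice then gives the sign of $g$. This sidesteps the fourth-order degeneracy entirely because multiplying by $e^{(a-1)x}$ linearizes one side. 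A termwise coefficient comparison of the same flavor would in fact prove your $\psi^{(4)}>0$ for $x>0$ (the needed inequalities reduce to $2c\le1$ and $(2c)^{n+3}(n+4)\le1$), but the case $x<0$ --- which you need for $G(b_2^+,\cdot)$ and which the paper handles by a separate, symmetric Taylor comparison for $a\ge3/4$ --- would still require its own argument. Until $\psi>0$ (or an equivalent) is actually proved, the lemma is not established.
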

\begin{proof}
Obviously, one has
$$
G (a,x) = \frac{xe^{(1-a)x} -e^x+1 - (1/2-a)x(e^x-1)}{x(e^x-1)} .
$$
Since $x(e^x-1) >0$ for all $x>0$, we consider the numerator of $G (a,x)$ written as
$$
g(a,x) := {x(e^x-1)} G (a,x)=  xe^{(1-a)x} -e^x+1 - (1/2-a)x(e^x-1).
$$
By differentiating with respect to $x$, we have
\begin{equation*}
\begin{split}
& g' (a,x) = (1-a) xe^{(1-a)x} + e^{(1-a)x} - e^x- (1/2-a) (xe^x +e^x-1), \\
& g'' (a,x) = \bigl( (1-a)^2x +2(1-a) \bigr) e^{(1-a)x} - e^{x} - (1/2-a) (xe^x +2e^x).
\end{split}
\end{equation*}
Obviously, one has 
\begin{equation}\label{eq:ggg0}
g(a,0) = g'(a,0)= g''(a,0)=0. 
\end{equation}
Now we consider the function
$$
e^{(a-1)x} g'' (a,x) = (1-a)^2x +2(1-a) - (1/2-a) xe^{ax} - 2(1-a) e^{ax} .
$$

First suppose $0< a  \le 1/2$. Then we have $\lim_{x \to \infty}e^{(a-1)x} g'' (a,x) = -\infty$. By the Taylor expansion of $e^{ax}= \sum_{n=0}^\infty (a x)^n/n!$, one has
\begin{equation*}
\begin{split}
&(1/2-a) xe^{ax}  + 2(1-a) e^{ax} = 
(1/2-a) \bigl( x + ax^2+\cdots \bigr) +2 (1-a) \bigl( 1 + ax+\cdots \bigr) \\ &=
2(1-a) + \bigl( (1/2-a) + 2a(1-a) \bigl) x + \cdots .
\end{split}
\end{equation*}
In this case, we have
$$
(1-a)^2x +2(1-a) <  (1/2-a) xe^{ax}  +2(1-a) e^{ax}, \qquad x>0
$$
if $(1-a)^2 \le (1/2-a) + 2a(1-a)$ which is equivalent to $3B_2(a) =3a^2 -3a+1/2 \le 0$, where $B_2(a)$ is the second Bernoulli polynomial. Hence one has $g'' (a,x) <0$ for all $x>0$ when $(3-\sqrt{3}) /6 \le a  \le 1/2$. Thus we obtain $g' (a,x) <0$ and $g (a,x) <0$ for all $x>0$ when $b_2^- \le a  \le 1/2$ from (\ref{eq:ggg0}). 

Next assume $a \ge 1/2$. Then one has $\lim_{x \to \infty}e^{(a-1)x} g'' (a,x) = \infty$. By the Taylor expansion of $e^{ax}= \sum_{n=0}^\infty (a x)^n/n!$, we have
$$
(a-1/2) xe^{ax} = \sum_{n=1}^\infty \frac{(a-1/2)a^{n-1}}{(n-1)!} x^n ,\qquad
2(1-a) e^{ax} = \sum_{n=0}^\infty \frac{2(1-a) a^n}{n!} x^n .
$$
For $n \ge 2$ and $a \ge 3/4$, it holds that
$$
 \frac{(a-1/2)a^{n-1}}{(n-1)!} \ge \frac{2(1-a) a^n}{n!}
$$
since we have
$$
n(a-1/2) \ge 2 (a-1/2) \ge 1/2 \ge -2(a-1/2)^2 + 1/2 = 2a(1-a). 
$$
Note that $(3+\sqrt{3})/6 = 0.788675... > 3/4$. Thus we have
$$
(1-a)^2x +2(1-a) + (a-1/2) xe^{ax} > 2(1-a) e^{ax} , \qquad x>0
$$
if $(1-a)^2 + (a-1/2) \ge 2a(1-a)$ which is equivalent to $3B_2(a) =3a^2 -3a+1/2 \ge 0$. Thus we obtain $g'' (a,x) >0$ for all $x>0$ when $a \ge (3+\sqrt{3})$. Hence one has $g' (a,x) >0$ and $g (a,x) >0$ for all $x>0$ when $a \ge b_2^+$ from (\ref{eq:ggg0}). 

Finally suppose $0 < a < b_2^-$ or $1/2 < a < b_2^+$. By (\ref{eq:haxtay}) and (\ref{eq:defg1}), one has
$$
2G(a,x) = B_2(a)x + O(x^2).
$$
Hence we have
$$
G(a,x) > 0, \quad 0 < a < b_2^- \quad \mbox{ and } \quad G(a,x) < 0, \quad 1/2 < a < b_2^+ 
$$
when $x>0$ is sufficiently small. On the other hand, it holds that
$$
G(a,x) < 0, \quad 0 < a < b_2^- \quad \mbox{ and } \quad G(a,x) > 0, \quad 1/2 < a < b_2^+ 
$$
when $x>0$ is sufficiently large from the term $(1/2-a)x(e^x-1)$ in $G(a,x)$. Hence we have this lemma.
\end{proof}

By using Proposition \ref{pro:irhur1}, Lemma \ref{lem:negdefi} and the fact that $\Gamma (\sigma)<0$ when $-1<\sigma <0$, we have the following.
\begin{proposition}\label{pro:defi1}
For $-1<\sigma <0$, one has 
$$
\zeta (\sigma,a) > 0, \quad b_2^- \le a \le 1/2 \quad \mbox{ and } \quad 
\zeta (\sigma,a) < 0, \quad b_2^+ \le a \le 1. 
$$
\end{proposition}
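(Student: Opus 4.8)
The plan is to read off the sign of $\zeta(\sigma,a)$ directly from the integral representation, exploiting that for a real argument $s=\sigma$ the kernel $x^{s-1}$ is a strictly positive weight. First I would specialize Proposition~\ref{pro:irhur1} to real $s=\sigma\in(-1,0)$; recognizing the integrand as $G(a,x)\,x^{\sigma-1}$ via the definition (\ref{eq:defg1}), this yields
$$
\Gamma(\sigma)\,\zeta(\sigma,a)=\int_0^\infty G(a,x)\,x^{\sigma-1}\,dx,
$$
the convergence of which is already guaranteed by the estimates established in the proof of Proposition~\ref{pro:irhur1}.

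Next I would use that $x^{\sigma-1}>0$ for every $x>0$, so the sign of the integral is governed entirely by the sign of $G(a,x)$. By Lemma~\ref{lem:negdefi}, when $b_2^-\le a\le 1/2$ one has $G(a,x)<0$ for all $x>0$, so the integrand is strictly negative and hence $\Gamma(\sigma)\,\zeta(\sigma,a)<0$; when $b_2^+\le a\le 1$ one has $G(a,x)>0$ for all $x>0$, so $\Gamma(\sigma)\,\zeta(\sigma,a)>0$. The strictness is genuine, since a continuous function of constant strict sign integrates against a positive weight to a value of that same strict sign.

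Finally I would account for the Gamma factor. For $\sigma\in(-1,0)$ the identity $\Gamma(\sigma)=\Gamma(\sigma+1)/\sigma$ shows $\Gamma(\sigma)<0$, because $\Gamma(\sigma+1)>0$ (as $\sigma+1\in(0,1)$) while $\sigma<0$. Dividing the two sign relations above by this negative quantity reverses the inequalities, giving $\zeta(\sigma,a)>0$ for $b_2^-\le a\le 1/2$ and $\zeta(\sigma,a)<0$ for $b_2^+\le a\le 1$, as claimed.

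There is no serious obstacle here: the genuine analytic content has already been carried out in Proposition~\ref{pro:irhur1} (the analytic continuation and convergence) and in Lemma~\ref{lem:negdefi} (the definite sign of $G$), so what remains is essentially bookkeeping of signs. The one point that must not be overlooked is the sign reversal forced by $\Gamma(\sigma)<0$; tracking it correctly is exactly what converts ``$G<0$'' into ``$\zeta>0$'' and ``$G>0$'' into ``$\zeta<0$''.
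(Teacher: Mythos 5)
Your argument is correct and is exactly the paper's: the paper derives this proposition in one line from Proposition~\ref{pro:irhur1}, Lemma~\ref{lem:negdefi}, and the negativity of $\Gamma(\sigma)$ on $(-1,0)$, which is precisely the sign bookkeeping you carry out. Nothing is missing.
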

\begin{proof}[Proof of Theorem \ref{th:hlz1} (I)]
We only have to consider the cases $0 < a < b_2^-$ or $1/2 < a < b_2^+$ from Proposition \ref{pro:defi1}. It is well-known that
$$
\zeta (0,a) = \frac{1}{2} -a , \qquad 
\zeta (-1,a) = -\frac{a^2}{2} + \frac{a}{2} - \frac{1}{12} = - \frac{B_2(a)}{2}
$$
(see \cite[Proposition 9.3]{AIK} or \cite[Theorem 12.3]{Apo}). Thus it holds that
\begin{equation*}
\begin{split}
&\zeta (0,a) > 0 \quad \mbox{and} \quad \zeta (-1,a) < 0, \qquad \mbox{when} \quad 0 < a < b_2^-, \\
&\zeta (0,a) < 0 \quad \mbox{and} \quad \zeta (-1,a) > 0, \qquad \mbox{when} \quad 1/2 < a < b_2^+.
\end{split}
\end{equation*}
Therefore, we have Theorem \ref{th:hlz1} (I).
\end{proof}

\subsection{Proof of Theorem \ref{th:hlz1} (II) and (III)}
We quote the following integral representation of Hurwitz-Lerch zeta function $\Phi (s,a,z)$.
\begin{lemma}[{see \cite[p.~53, (3)]{Er}}]
When $z \ne 1$, we have
\begin{equation}\label{eq:hlzinrp1}
\Phi (s,a,z) = \frac{1}{\Gamma (s)} \int_0^\infty \frac{x^{s-1} e^{(1-a)x}}{e^x-z} dx, \qquad \Re (s) >0. 
\end{equation}
\end{lemma}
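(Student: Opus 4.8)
The plan is to derive the formula first in the region of absolute convergence $\sigma>1$ and then extend it to $\sigma>0$ by analytic continuation, with the hypothesis $z\ne 1$ entering decisively only at the last step. First I would convert each term of the Dirichlet series into a Gamma integral by the scaling $t\mapsto (n+a)x$ in the Euler integral for $\Gamma(s)$, which gives
\begin{equation*}
(n+a)^{-s}\Gamma(s)=\int_0^\infty x^{s-1}e^{-(n+a)x}\,dx,\qquad \sigma>0,\ n\ge 0.
\end{equation*}
Multiplying by $z^n$ and summing over $n\ge 0$ then yields, formally,
\begin{equation*}
\Gamma(s)\Phi(s,a,z)=\int_0^\infty x^{s-1}e^{-ax}\sum_{n=0}^\infty (ze^{-x})^n\,dx
=\int_0^\infty \frac{x^{s-1}e^{-ax}}{1-ze^{-x}}\,dx,
\end{equation*}
where the geometric series converges for every $x>0$ since $|ze^{-x}|\le e^{-x}<1$ when $|z|\le 1$. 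Rewriting $e^{-ax}/(1-ze^{-x})=e^{(1-a)x}/(e^x-z)$ produces the asserted integrand.

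The one genuine point to justify is the interchange of summation and integration. For $\sigma>1$ and $|z|\le 1$ this is routine by Tonelli's theorem: the associated double integral equals $\Gamma(\sigma)\sum_{n}|z|^n(n+a)^{-\sigma}<\infty$, so the formal manipulation is legitimate and the representation holds for $\sigma>1$, as yet without using $z\ne 1$.

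It then remains to continue both sides to $\sigma>0$. Because $z\ne 1$, the denominator $e^x-z$ stays bounded away from zero on $[0,\infty)$; near $x=0$ the integrand therefore behaves like $x^{s-1}/(1-z)$, which is integrable precisely when $\sigma>0$, while as $x\to\infty$ it decays like $e^{-ax}$ since $0<a\le 1$. Hence the integral converges absolutely and locally uniformly in $s$ on $\sigma>0$, defining a holomorphic function there, and $1/\Gamma(s)$ is entire. On the other side, for $z\ne 1$ the function $\Phi(s,a,z)$ already continues holomorphically to the whole plane (as recalled in the introduction), so both sides are holomorphic on the connected region $\sigma>0$ and coincide on $\sigma>1$; by the identity theorem they agree throughout $\sigma>0$, which is the claim.

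The main obstacle is bookkeeping rather than depth: one must track that the local behavior at the origin is exactly what pins the domain to $\sigma>0$, and that it is precisely the hypothesis $z\ne 1$ (so that $1-z\ne 0$) which cancels the would-be $1/x$ singularity of the integrand at $x=0$. Everything else follows from absolute convergence and the identity theorem.
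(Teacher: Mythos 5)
The paper offers no proof of this lemma at all: it is quoted verbatim from Erd\'elyi et al.\ \cite{Er}, so there is nothing internal to compare your argument against. Your proof is correct and is the standard derivation: the termwise Mellin transform $(n+a)^{-s}\Gamma(s)=\int_0^\infty x^{s-1}e^{-(n+a)x}\,dx$, summation of the geometric series $\sum_n (ze^{-x})^n$ justified by Tonelli for $\sigma>1$, and then analytic continuation to $\sigma>0$ via the identity theorem, using that for $z\ne 1$ the integral converges absolutely and locally uniformly on $\sigma>0$ (the denominator $e^x-z$ is bounded away from zero on $[0,\infty)$, so the integrand is $O(x^{\sigma-1})$ at the origin and $O(x^{\sigma-1}e^{-ax})$ at infinity) while $\Phi(s,a,z)$ is entire. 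You correctly isolate the two delicate points---the interchange of sum and integral, which genuinely requires $\sigma>1$ since the Dirichlet series is only conditionally convergent for $0<\sigma\le 1$ when $|z|=1$, and the role of $z\ne 1$ in cancelling the would-be $1/x$ singularity---and your handling of each is sound; this is also consistent with how the paper itself later extends the representation to $-1<\sigma<0$ by subtracting $1/(1-z)$.
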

By using the integral formula above, we obtain the following.
\begin{proposition}
For $z \ne 1$, one has
\begin{equation}\label{eq:hlzinrp2}
\Phi (s,a,z) = \frac{1}{\Gamma (s)} \int_0^\infty 
\biggl( \frac{e^{(1-a)x}}{e^x-z} -\frac{1}{1-z} \biggr)x^{s-1} dx, 
\qquad -1 < \Re (s) < 0. 
\end{equation}
\end{proposition}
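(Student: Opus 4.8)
The plan is to transcribe, with one simplification, the proof of Proposition~\ref{pro:irhur1}, replacing the function $H(a,x)$ — whose pole at $x=0$ had to be removed by subtracting $1/x$ — with the function $F(a,z,x) := e^{(1-a)x}/(e^x-z)$, which for $z\ne 1$ is already \emph{regular} at the origin. The decisive observation is that $e^0-z = 1-z \ne 0$, so $F(a,z,\cdot)$ is analytic near $0$ with $F(a,z,0) = 1/(1-z)$; consequently, after subtracting this constant, the function
$$
\widetilde G(a,z,x) := \frac{e^{(1-a)x}}{e^x-z} - \frac{1}{1-z}
$$
satisfies $\widetilde G(a,z,x) = O(x)$ as $x \to 0^+$ by the Taylor expansion of $F$ at the origin. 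Thus, in contrast with the Hurwitz case $z=1$, only a single regularization (subtracting a constant, not $1/x$) is needed.

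First I would start from the representation (\ref{eq:hlzinrp1}), valid for $\Re(s)>0$, restrict to the strip $0<\sigma<1$, split $\int_0^\infty = \int_0^1 + \int_1^\infty$, and use $\int_0^1 x^{s-1}\,dx = 1/s$ to obtain
$$
\Gamma(s)\Phi(s,a,z) = \int_0^1 \widetilde G(a,z,x)\,x^{s-1}\,dx + \frac{1}{1-z}\cdot\frac{1}{s} + \int_1^\infty \frac{e^{(1-a)x}}{e^x-z}\,x^{s-1}\,dx.
$$
The next step is to verify that this right-hand side is analytic on a larger region. Since $\widetilde G(a,z,x)=O(x)$, one has $\int_0^1 |\widetilde G(a,z,x)|\,|x^{s-1}|\,dx \ll \int_0^1 x^{\sigma}\,dx<\infty$ for $\sigma>-1$; and since $0<a\le 1$ forces $e^{(1-a)x}/(e^x-z)$ to decay like $e^{-ax}$ as $x\to\infty$ (the denominator never vanishing because $e^x>1\ge |z|$ for $x>0$), the integral over $[1,\infty)$ is entire in $s$. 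Hence the displayed expression is analytic on $\{\sigma>-1\}\setminus\{0\}$ and, agreeing with $\Gamma(s)\Phi(s,a,z)$ on $0<\sigma<1$, furnishes its analytic continuation across the line $\sigma=0$.

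Finally, for $-1<\sigma<0$ I would invoke the identity (\ref{eq:s-1}), namely $1/s = -\int_1^\infty x^{s-1}\,dx$, to fold the pole term back into the tail:
$$
\Gamma(s)\Phi(s,a,z) = \int_0^1 \widetilde G(a,z,x)\,x^{s-1}\,dx + \int_1^\infty\!\Bigl(\frac{e^{(1-a)x}}{e^x-z}-\frac{1}{1-z}\Bigr)x^{s-1}\,dx = \int_0^\infty \widetilde G(a,z,x)\,x^{s-1}\,dx,
$$
which is exactly (\ref{eq:hlzinrp2}) after dividing by $\Gamma(s)$ (legitimate since $\Gamma$ is finite and nonzero on the open strip). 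I do not expect a serious obstacle: the argument is a direct analogue of Proposition~\ref{pro:irhur1}. The only point demanding genuine care is the behaviour at the two endpoints — checking that subtracting the single constant $1/(1-z)$ simultaneously produces convergence at $0$ (needing $\sigma>-1$) and, after the substitution (\ref{eq:s-1}), at $\infty$ (needing $\sigma<0$); it is precisely this double constraint that pins down the strip $-1<\sigma<0$.
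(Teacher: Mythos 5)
Your argument is correct and follows essentially the same route as the paper: split the integral in (\ref{eq:hlzinrp1}) at $x=1$, subtract the constant $1/(1-z)$ on $[0,1]$ to gain convergence for $\sigma>-1$, and use $1/s=-\int_1^\infty x^{s-1}\,dx$ to absorb the resulting $1/(s(1-z))$ term into the tail for $-1<\sigma<0$. Your explicit remarks on the $O(x)$ behaviour at the origin and the nonvanishing of $e^x-z$ for $x>0$ only make more precise what the paper leaves implicit.
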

\begin{proof}
When $\sigma >0$, we have
\begin{equation*}
\begin{split}
&\Gamma (s) \Phi (s,a,z) = 
\int_0^1 \frac{x^{s-1} e^{(1-a)x}}{e^x-z} dx + \int_1^\infty \frac{x^{s-1} e^{(1-a)x}}{e^x-z} dx \\ =&
\int_0^1 \biggl( \frac{e^{(1-a)x}}{e^x-z} -\frac{1}{1-z} \biggr)x^{s-1} dx
+ \int_0^1 \frac{x^{s-1}}{1-z} dx + \int_1^\infty \frac{x^{s-1} e^{(1-a)x}}{e^x-z} dx \\ =&
\int_0^1 \biggl( \frac{e^{(1-a)x}}{e^x-z} -\frac{1}{1-z} \biggr)x^{s-1} dx 
+ \int_1^\infty \frac{x^{s-1} e^{(1-a)x}}{e^x-z} dx + \frac{1}{s(1-z)} 
\end{split}
\end{equation*}
by using (\ref{eq:hlzinrp1}). From
$$
\lim_{x \to 0+} \biggl( \frac{e^{(1-a)x}}{e^x-z} -\frac{1}{1-z} \biggr) =0,
$$
we can see that
$$
\int_0^1 \biggl| \frac{e^{(1-a)x}}{e^x-z} -\frac{1}{1-z} \biggr| \bigl|x^{s-1} \bigr| dx \ll
\int_0^1 x^{\sigma} dx < \infty
$$
when $-1 < \Re (s) < 0$. Hence we have this proposition by (\ref{eq:s-1}). 
\end{proof}

In order to prove Theorem \ref{th:hlz1} (II), we show the following.
\begin{proposition}
When $z \in [-1,1)$ and $(1-a)(1-z) \le 1$, we have
$$
\Phi (\sigma,a,z) >0, \qquad -1 < \sigma <0.
$$
\end{proposition}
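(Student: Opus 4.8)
The plan is to argue directly from the integral representation (\ref{eq:hlzinrp2}), which is valid for $-1 < \sigma < 0$, and to pin down the sign of the integrand so that the sign of $\Phi(\sigma,a,z)$ is dictated by that of $\Gamma(\sigma)$. Setting
$$
F(x) := \frac{e^{(1-a)x}}{e^x - z} - \frac{1}{1-z},
$$
I would first note that for $z \in [-1,1)$ and $x > 0$ both $e^x - z$ and $1-z$ are strictly positive, so the sign of $F(x)$ coincides with that of its numerator
$$
N(x) := (1-z)e^{(1-a)x} - e^x + z .
$$
The entire proposition then reduces to showing $N(x) < 0$ for every $x > 0$.

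The natural way to do this is a monotonicity argument anchored at the boundary value $N(0) = (1-z) - 1 + z = 0$. Differentiating gives $N'(x) = (1-z)(1-a)e^{(1-a)x} - e^x$, and dividing by $e^{(1-a)x} > 0$ shows that $N'(x) < 0$ is equivalent to the elementary inequality $(1-z)(1-a) < e^{ax}$. This is exactly where the hypothesis is used: since $a > 0$ and $x > 0$ we have $e^{ax} > 1$, while $(1-a)(1-z) \le 1$ by assumption, so $(1-z)(1-a) \le 1 < e^{ax}$ and hence $N'(x) < 0$ for all $x > 0$. Combined with $N(0) = 0$ this forces $N(x) < 0$, and therefore $F(x) < 0$, on the whole half-line $x > 0$. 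The degenerate case $a = 1$ can be handled on the side even more cheaply, since then $N(x) = 1 - e^x < 0$ outright.

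Finally I would assemble the conclusion. Because $F(x) < 0$ and $x^{\sigma-1} > 0$ for $x > 0$, and the integral converges by (\ref{eq:hlzinrp2}), the integral $\int_0^\infty F(x) x^{\sigma-1}\,dx$ is strictly negative; dividing by $\Gamma(\sigma)$, which is negative on $(-1,0)$ as already noted, yields $\Phi(\sigma,a,z) > 0$. The one step deserving genuine care — the main obstacle — is the reduction of the sign of $N'(x)$ to the inequality $(1-z)(1-a) < e^{ax}$, since this is precisely the point at which the sharp condition $(1-a)(1-z) \le 1$ is consumed and no slack remains; everything else is routine bookkeeping with the positivity of the denominators and of $x^{\sigma-1}$ together with the known sign of $\Gamma$ on $(-1,0)$.
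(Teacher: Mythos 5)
Your proposal is correct and follows essentially the same route as the paper: both reduce the sign of the integrand in (\ref{eq:hlzinrp2}) to that of the numerator $g_z(a,x)=(1-z)e^{(1-a)x}-e^x+z$, use $g_z(a,0)=0$ together with $g_z'(a,x)<0$ (equivalent to $(1-z)(1-a)<e^{ax}$, which follows from the hypothesis since $e^{ax}>1$), and conclude via the negativity of $\Gamma(\sigma)$ on $(-1,0)$. No substantive difference.
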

\begin{proof}
Let $z \in [-1,1)$ and put
$$
G_{z} (a,x) := \frac{e^{(1-a)x}}{e^x-z} - \frac{1}{1-z} = \frac{(1-z)e^{(1-a)x} - e^x+z}{(1-z)(e^x-z)}.
$$
Hence consider the numerator of $G_{z} (a,x)$ expressed as
$$
g_{z} (a,x) := (1-z)e^{(1-a)x} - e^x+z .
$$
Then we obtain $g_{z} (a,0)=0$ and
$$
g_{z} ' (a,x) = (1-z)(1-a)e^{(1-a)x} - e^x.
$$
Obviously $g_{z} ' (a,x) <0$ is equivalent to $(1-z)(1-a) < e^{ax}$. We can see that $(1-z)(1-a) < e^{ax}$ for all $x>0$ if  $(1-a)(1-z) \le 1$. Hence, we have this proposition by (\ref{eq:hlzinrp2}) and the fact that $\Gamma (\sigma) <0$ when $-1 < \sigma <0$.
\end{proof}

\begin{proof}[Proof of Theorem \ref{th:hlz1} (II)]
We only have to prove that $\Phi (\sigma,a,z)$ has zeros in the interval $(-1,0)$ when  $(1-a)(1-z) > 1$. From \cite[p.~17]{KKY}, one has
$$
\Phi (0,a,z) = \frac{1}{1-z} >0, \qquad \Phi (-1,a,z) = \frac{a}{1-z} + \frac{z}{(1-z)^2}.
$$
When $(1-a)(1-z) > 1$, we have $\Phi (-1,a,z) < 0$. Therefore, $\Phi (\sigma,a,z)$ vanishes in the interval $(-1,0)$ if $(1-a)(1-z) > 1$ by the intermediate value theorem. 
\end{proof}

\begin{proof}[Proof of Theorem \ref{th:hlz1} (III)]
For $0<r\le 1$, $0 < \theta < \pi$ and $\pi < \theta < 2\pi$, put
\begin{equation*}
\begin{split}
&G_{r,\theta} (a,x) := \frac{e^{(1-a)x}}{e^x-re^{i \theta}} - \frac{1}{1-re^{i \theta}} \\&=
\frac{e^{(1-a)x}((e^x-r\cos \theta)+ir\sin \theta)}{(e^x-r\cos \theta)^2+r^2\sin \theta}
- \frac{(1-r\cos \theta)+ir\sin \theta}{(1-r\cos \theta)^2+r^2\sin \theta}.
\end{split}
\end{equation*}
Obviously, we obtain that
$$
\Im \bigl( G_{r,\theta} (a,x) \bigr) = 
\frac{e^{(1-a)x} r\sin \theta}{e^{2x} +r^2 - 2e^xr\cos \theta}- \frac{r\sin \theta}{1+r^2-2r\cos \theta} .
$$
Now we consider the following three functions
\begin{equation*}
\begin{split}
& g_{r,\theta}^\flat (a,x) := e^{(1-a)x} \bigl( 1+r^2-2r\cos \theta \bigr) , \qquad 
g_{r,\theta}^\sharp (a,x) := e^{2x} +r^2 - 2e^xr\cos \theta, \\
& g_{r,\theta}^\natural(a,x) := e^{2(1-a)x} +r^2 - 2e^{(1-a)x}r\cos \theta .
\end{split}
\end{equation*}
For all $x>0$, we can show $g_{r,\theta}^\flat (a,x) < g_{r,\theta}^\natural (a,x) < g_{r,\theta}^\sharp (a,x)$ from 
\begin{equation*}
\begin{split}
& g_{r,\theta}^\sharp (a,x) - g_{r,\theta}^\natural(a,x) =
\bigl( e^x - e^{(1-a)x} \bigr) \bigl( e^x + e^{(1-a)x} -2r\cos \theta \bigr) >0, \\
& g_{r,\theta}^\natural(a,x) - g_{r,\theta}^\flat (a,x) = 
\bigl( e^{(1-a)x} - 1 \bigr) \bigl( e^{(1-a)x} - r^2 \bigr) > 0.
\end{split}
\end{equation*}
Hence $\Im (G_{r,\theta} (a,x) / \sin \theta) <0$ for all $x>0$. Therefore, we obtain $\Im (\Phi (\sigma,a,z)) \ne 0$ when $-1 < \Re (s) < 0$ for $z=re^{i\theta}$, where $0<r\le 1$, $0 < \theta < \pi$ and $\pi < \theta < 2\pi$.
\end{proof}

\section{New proofs of the functional equations}
\subsection{The case ${\bm{z=1}}$}
In this subsection, we show the functional equation
\begin{equation}\label{eq:hueq1}
\zeta (s,a) = \frac{(-\pi i) (2\pi)^{s-1}}{\Gamma(s) \sin \pi s}
\Biggl( e^{\pi is/2} \sum_{n=1}^\infty \frac{e^{2\pi ina}}{n^{1-s}} -
e^{-\pi is/2} \sum_{n=1}^\infty \frac{e^{-2\pi ina}}{n^{1-s}} \Biggr), 
\end{equation}
where $0<a<1$ and $\Re (s) <0$ (see for instance \cite[Theorem 9.4]{AIK}) by the integral representation (\ref{eq:irhur1}). The following formulas are well-known.
\begin{lemma}[{see the proof of \cite[Theorem 4.11]{AIK}}]
For $0<a<1$, one has
$$
\frac{e^{ax}}{e^x-1} - \frac{1}{x} = \lim_{N \to \infty} \sum_{0 \ne n =N}^{-N} \frac{e^{2\pi ina}}{x-2\pi in},
\qquad 
a-\frac{1}{2} = \lim_{N \to \infty} \sum_{0 \ne n =N}^{-N} \frac{e^{2\pi i na}}{-2\pi in}.
$$
\end{lemma}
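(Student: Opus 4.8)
The plan is to read both identities as statements about the single meromorphic function $F(w) := e^{aw}/(e^w-1)$ of a complex variable $w$. Its only singularities are simple poles at $w = 2\pi i n$, $n \in \mathbb{Z}$, where $e^w = 1$; since $e^{aw}$ takes the value $e^{2\pi i n a}$ there, the residue of $F$ at $w = 2\pi i n$ is exactly $e^{2\pi i n a}$. Thus the first identity is precisely the symmetric Mittag--Leffler partial-fraction expansion of $F$ after removing the pole $1/w$ at the origin, and I would obtain it by applying the residue theorem on a sequence of expanding contours. The second identity I would treat separately as the classical pointwise Fourier expansion of the sawtooth $a \mapsto a - 1/2$ on $(0,1)$; it is what pins down the constant term produced by the first computation.

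For the first identity I would fix $x \notin 2\pi i \mathbb{Z}$, $x\ne 0$, and integrate $F(w)\bigl(\tfrac{1}{w-x}-\tfrac{1}{w}\bigr)$ over the boundary $C_N$ of the square with vertices $\pm(2N+1)\pi(1\pm i)$, chosen so that its horizontal sides lie halfway between the consecutive poles $2\pi i N$ and $2\pi i(N+1)$. By the residue theorem $\frac{1}{2\pi i}\oint_{C_N}$ equals the sum of the enclosed residues: the term $F(x)$ coming from $w=x$; the term $-\tfrac{1}{x}-(a-\tfrac{1}{2})$ coming from $w=0$, where the simple pole $F(w)=\tfrac{1}{w}+(a-\tfrac{1}{2})+\cdots$ meets the kernel's pole; and $e^{2\pi i n a}\bigl(\tfrac{1}{2\pi i n-x}-\tfrac{1}{2\pi i n}\bigr)$ for each $0<|n|\le N$. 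Using $\tfrac{1}{2\pi i n-x}=-\tfrac{1}{x-2\pi i n}$ and rearranging then expresses $F(x)-\tfrac1x$ as the desired symmetric sum plus the constant $(a-\tfrac12)+\sum_{n\ne0}e^{2\pi i n a}/(2\pi i n)$, which vanishes exactly by the second identity.

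The step I expect to be the main obstacle is showing $\oint_{C_N}\to 0$, which reduces to a uniform bound $|F(w)|\le C$ on $\bigcup_N C_N$ with $C$ independent of $N$. Here $0<a<1$ enters essentially: on the right side $\Re w=(2N+1)\pi$ one has $|F(w)|\ll e^{(a-1)\Re w}\to 0$ since $a<1$; on the left side $\Re w=-(2N+1)\pi$ one has $|F(w)|\ll e^{a\Re w}\to 0$ since $a>0$; and on the horizontal sides $e^w=-e^{\Re w}$, so $|e^w-1|=e^{\Re w}+1\ge 1$ and $|F(w)|\le e^{a\Re w}/(e^{\Re w}+1)$ is bounded on all of $\mathbb{R}$ uniformly in $N$. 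Since the kernel is $O(|w|^{-2})$ while $\mathrm{length}(C_N)=O(N)$, the integral is $O(N^{-1})\to 0$, and letting $N\to\infty$ yields the expansion.

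Finally, for the second identity I would invoke Dirichlet's theorem for Fourier series: the $1$-periodic sawtooth $a\mapsto a-\tfrac12$ is smooth on $(0,1)$ with Fourier coefficients $\int_0^1(a-\tfrac12)e^{-2\pi i n a}\,da=-1/(2\pi i n)$ for $n\ne 0$ and $0$ for $n=0$, so its symmetric Fourier series converges pointwise to $a-\tfrac12$ at every $a\in(0,1)$, giving $a-\tfrac12=\sum_{n\ne0}e^{2\pi i n a}/(-2\pi i n)$. Equivalently, this is the $x\to 0$ limit of the first identity, the interchange of limit and symmetric summation being justified by the uniform convergence of the partial sums near $x=0$, where the nearest poles $\pm 2\pi i$ stay bounded away.
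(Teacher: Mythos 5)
Your proof is correct, but it is organized differently from the paper's. The paper does not actually prove this lemma in situ --- it quotes it from the proof of Theorem 4.11 in [AIK] --- and the closest argument it writes out is the proof of its generalization to $z\ne 1$ (Lemma 3.6): there one integrates $F(y)\,\frac{dy}{y-x}$ over the expanding squares $C_N$, the pole of $F$ at the origin (residue $1$ against the kernel value $-1/x$) already supplies the $-1/x$ term, so the first expansion falls out directly, and the companion constant identity is then obtained by letting $x\to 0$. You instead use the kernel $\frac{1}{w-x}-\frac{1}{w}$, which buys you a painless $O(N^{-1})$ bound on $\oint_{C_N}$ (the kernel is $O(|w|^{-2})$ against a uniformly bounded $F$ and length $O(N)$), but at the cost of an extra residue $-(a-\tfrac12)$ at the origin; to cancel it you must already know the second identity, which you then supply independently via Dirichlet's theorem for the sawtooth $a\mapsto a-\tfrac12$. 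Both routes are sound; the paper's ordering (first identity unconditionally, second as the $x\to 0$ limit) avoids importing Fourier-series convergence but requires a slightly more careful estimate of the horizontal sides of $C_N$, where $|F|$ is bounded but not small near the imaginary axis. One caution: your closing remark that the second identity is ``equivalently the $x\to 0$ limit of the first'' would be circular inside your own architecture, since your proof of the first identity consumes the second; it is harmless only because the Fourier-series argument is the one you actually rely on, so you should present that as the proof and drop or clearly subordinate the remark.
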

Recall $G(a,x)$ is defined by (\ref{eq:defg1}). From the lemma above, we have 
\begin{equation}\label{eq:ft}
\begin{split}
G(a,x) &= \lim_{N \to \infty} \sum_{0 \ne n =N}^{-N} \biggl( \frac{e^{-2\pi i na}}{x-2\pi in} 
- \frac{e^{2\pi i na}}{2\pi in}\biggr) \\ &=
\sum_{n=1}^\infty \frac{x e^{-2\pi i na}}{2\pi i n(x-2\pi i n)} - 
\sum_{n=1}^\infty \frac{x e^{2\pi i na}}{2\pi i n(x+2\pi i n)} .
\end{split}
\end{equation}

\begin{lemma}\label{lem:mt}
When $-1 < \Re (s) < 0$, we have
\begin{equation*}
\begin{split}
&\int_0^\infty \!\! \frac{x^s dx}{2\pi i n(x-2\pi i n)} = 
\frac{(2\pi)^s e^{\pi is/2}}{1-e^{2\pi is}} n^{s-1}, \qquad
\int_0^\infty \!\! \frac{x^s dx}{2\pi i n(x+2\pi i n)} = 
\frac{(2\pi)^s e^{3\pi is/2}}{1-e^{2\pi is}} n^{s-1}.
\end{split}
\end{equation*}
\end{lemma}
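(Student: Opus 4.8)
The plan is to recognise both integrals as Mellin transforms of elementary rational functions and to reduce them to the single classical integral
\[
\int_0^\infty \frac{u^s}{1+u}\,du = \frac{\pi}{\sin\pi(s+1)} = -\frac{\pi}{\sin\pi s},
\]
which is valid precisely for $-1<\Re(s)<0$ (it is the beta integral $\Gamma(s+1)\Gamma(-s)$, evaluated by the reflection formula). After pulling out the constant $1/(2\pi i n)$ I would focus on $\int_0^\infty x^s/(x\mp 2\pi i n)\,dx$. For a real shift $c>0$ the substitution $x=cu$ gives at once $\int_0^\infty x^s/(x+c)\,dx=-c^s\,\pi/\sin\pi s$, so the only real work is to pass from $c>0$ to the purely imaginary shifts $\pm 2\pi i n$.

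For that passage I would rotate the ray of integration rather than invoke analytic continuation, because the rotation produces the required phase factors transparently. For $\int_0^\infty x^s/(x+2\pi i n)\,dx$ the integrand $z^s/(z+2\pi i n)$, with the principal branch of $z^s$ (cut along $(-\infty,0]$), has its only pole at $z=-2\pi i n$ on the negative imaginary axis; I would rotate the contour counterclockwise from $(0,\infty)$ to the positive imaginary ray $z=e^{i\pi/2}t$, $t\in(0,\infty)$, which stays in the first quadrant and so crosses neither the pole nor the cut. The quarter-circle arcs at radius $R\to\infty$ and $\rho\to 0$ contribute nothing: their lengths are $O(R)$ and $O(\rho)$ while the integrand is $O(R^{s-1})$ and $O(\rho^{s})$, so the contributions are $O(R^{s})\to 0$ (as $\Re(s)<0$) and $O(\rho^{s+1})\to 0$ (as $\Re(s)>-1$). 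On the rotated ray $z^s=e^{i\pi s/2}t^s$, $dz=e^{i\pi/2}dt$ and $z+2\pi i n=e^{i\pi/2}(t+2\pi n)$, so the $e^{i\pi/2}$ factors cancel and the integral collapses to $e^{i\pi s/2}\int_0^\infty t^s/(t+2\pi n)\,dt=-e^{i\pi s/2}(2\pi n)^s\pi/\sin\pi s$. The integral with $x-2\pi i n$ is handled identically, except that its pole sits at $z=2\pi i n$ in the upper half-plane, so I would instead rotate clockwise to the negative imaginary ray $z=e^{-i\pi/2}t$, which yields the phase $e^{-i\pi s/2}$.

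It then remains to assemble and simplify. Dividing by $2\pi i n$ and using $\pi(2\pi)^{s-1}=(2\pi)^s/2$ turns both expressions into $-(2\pi)^s e^{\varepsilon i\pi s/2}n^{s-1}/(2i\sin\pi s)$, where $\varepsilon=-1$ for the $x-2\pi i n$ integral and $\varepsilon=+1$ for the $x+2\pi i n$ integral. Replacing $2i\sin\pi s=e^{i\pi s}-e^{-i\pi s}$ and multiplying numerator and denominator by $e^{i\pi s}$ makes the denominator $1-e^{2\pi i s}$ and the exponent $(\varepsilon/2+1)i\pi s$, i.e. $i\pi s/2$ when $\varepsilon=-1$ and $3i\pi s/2$ when $\varepsilon=+1$; absorbing the overall sign then gives exactly $\tfrac{(2\pi)^s e^{\pi i s/2}}{1-e^{2\pi i s}}n^{s-1}$ and $\tfrac{(2\pi)^s e^{3\pi i s/2}}{1-e^{2\pi i s}}n^{s-1}$.

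The only genuinely delicate point is the branch and phase bookkeeping: one must fix the principal branch of $z^s$ throughout, track the arguments $\pm\pi/2$ of the rotated ray and of the factors $z\mp 2\pi i n=e^{\pm i\pi/2}(t+2\pi n)$ consistently, and verify that the two arcs vanish under $-1<\Re(s)<0$ (this is exactly where both endpoints of the strip are used). Once the phases are pinned down, everything else reduces to the single real Mellin integral together with routine algebra involving $\sin\pi s$.
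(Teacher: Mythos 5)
Your argument is correct, but it reaches the two formulas by a genuinely different route from the paper. The paper runs a keyhole contour around the positive real axis (radial segments from $r$ to $R$ and back, joined by circles of radii $R\to\infty$ and $r\to 0$): the factor $1-e^{2\pi is}$ records the jump of $x^s$ across the cut, the residue theorem applied to the single pole at $\pm 2\pi i n$ produces $(\pm 2\pi i n)^s$, and the phases $e^{\pi is/2}$ and $e^{3\pi is/2}$ come from taking the argument in $(0,2\pi)$. You instead quote the beta/reflection-formula evaluation $\int_0^\infty u^s(1+u)^{-1}\,du=-\pi/\sin\pi s$ for a positive real shift and transfer it to the imaginary shifts $\mp 2\pi i n$ by rotating the ray of integration through a quarter turn, the phases arising from $z^s=e^{\pm i\pi s/2}t^s$ on the rotated ray; the identity $2i\sin\pi s=e^{i\pi s}-e^{-i\pi s}$ then converts your answers into the stated form with denominator $1-e^{2\pi is}$, and your final bookkeeping $-e^{\varepsilon i\pi s/2}/(2i\sin\pi s)=e^{(\varepsilon/2+1)i\pi s}/(1-e^{2\pi is})$ with $\varepsilon=\mp1$ does give the exponents $\pi is/2$ and $3\pi is/2$ correctly. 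Both proofs use the strip $-1<\Re(s)<0$ in exactly the same two places, namely the vanishing of the large and small arcs. Your version buys a cleaner branch discussion, since the principal branch suffices and no pole is ever encircled; the paper's version is self-contained in that it does not presuppose the value of the real-shift Mellin integral, deriving everything from one application of the residue theorem.
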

\begin{proof}
Let $R>r >0$. Then consider the following contour integral, \\
$\,\,\,\,\; C_1:$ the part of real axis from $r$ to $R$, \\
$I(R):$ the circle of radius $R$ with center at the origin (oriented counter-clockwise), \\
$\,\,\,\,\; C_2:$ the part of real axis from $R$ to $r$, \\
$\; I(r):$ the circle of radius $r$ with center at the origin (oriented clockwise). \\
Then we have $\int_{I(R)} \to 0$ when $R \to \infty$ and $\int_{I(r)} \to 0$ when $r \to 0$. Hence it holds that
\begin{equation*}
\begin{split}
&\bigl(1-e^{2\pi is}\bigl)\int_0^\infty \!\! \frac{x^s dx}{2\pi i n(x-2\pi i n)}  = 
2 \pi i \frac{(2\pi i n)^s}{2\pi i n} = (2\pi)^s e^{\pi is/2} n^{s-1} ,\\
&\bigl(1-e^{2\pi is}\bigl)\int_0^\infty \!\! \frac{x^s dx}{2\pi i n(x+2\pi i n)}  = 
2 \pi i \frac{(-2\pi i n)^s}{2 \pi in} = (2\pi)^s e^{3\pi is/2} n^{s-1}
\end{split}
\end{equation*}
by the residue theorem. The formulas above imply Lemma \ref{lem:mt}.
\end{proof}

\begin{proof}[Proof of (\ref{eq:hueq1})]
For $0<a<1$ and  $-1 < \Re (s) < 0$, one has
\begin{equation*}
\begin{split}
\Gamma (s) &\zeta (s,a) = \int_0^\infty \!\! G(a,x) x^{s-1} dx = 
\int_0^\infty \sum_{n=1}^\infty \frac{e^{-2\pi i na}x^s dx}{2\pi i n(x \!-\! 2\pi i n)} -
\int_0^\infty \sum_{n=1}^\infty \frac{e^{2\pi i na} x^s dx}{2\pi i n(x \!+\! 2\pi i n)} \\ &= 
\frac{2i \pi i (2\pi)^{s-1} e^{\pi is/2}}{e^{2\pi is}-1} \sum_{n=1}^\infty \frac{e^{-2\pi ina}}{n^{1-s}} -
\frac{2i \pi i(2\pi)^{s-1} e^{3\pi is/2}}{e^{2\pi is}-1} \sum_{n=1}^\infty \frac{e^{2\pi ina}}{n^{1-s}}\\&=
\frac{\pi i (2\pi)^{s-1} e^{-\pi is/2}}{\sin \pi s} \sum_{n=1}^\infty \frac{e^{-2\pi ina}}{n^{1-s}} -
\frac{\pi i (2\pi)^{s-1} e^{\pi is/2}}{\sin \pi s} \sum_{n=1}^\infty \frac{e^{2\pi ina}}{n^{1-s}}.
\end{split}
\end{equation*}
by (\ref{eq:ft}), Proposition \ref{pro:irhur1} and Lemma \ref{lem:mt}. The interchange the order of the integral and the summation are justified by Lebesgue's dominated convergence theorem.
\end{proof}

\subsection{The case ${\bm{z\ne 1}}$}
Next, we prove the functional equation
\begin{equation}\label{eq:hueq2}
\Phi (s,a,z) = z^{-a} \Gamma (1-s) \sum_{n=\infty}^{-\infty} (-\log z+ 2\pi in )^{s-1} e^{2\pi ina}, 
\end{equation}
where $0<a<1$, $z\ne 1$ and $\Re (s) <0$ (see \cite[p.~28, (6)]{Er}) by the using integral representation (\ref{eq:hlzinrp2}). The following is a generalization of Lemma \ref{lem:mt}.
\begin{lemma}\label{lem:zne1}
For $0<a<1$ and $z\ne 1$, we have 
\begin{equation*}
\frac{e^{(1-a)x}}{e^x-z} = \lim_{N\to \infty}\sum_{n=N}^{-N} \frac{z^{-a}e^{-2 \pi ina}}{x-2\pi in -\log z}, \qquad
\frac{-1}{1-z} = \lim_{N\to \infty}\sum_{n=N}^{-N} \frac{z^{-a}e^{-2 \pi ina}}{2\pi in+\log z}.
\end{equation*}
\end{lemma}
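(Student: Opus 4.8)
The plan is to reduce both identities to the $z=1$ partial fraction expansion quoted just before Lemma~\ref{lem:mt}, by means of the shift $u := x-\log z$, and then to obtain the second identity by specializing the first one at $x=0$. Throughout I fix once and for all the branch of $\log z$ used to define $z^{-a}=e^{-a\log z}$, and I note that $e^x-z$ vanishes exactly at the points $x=\log z+2\pi in$ ($n\in{\mathbb Z}$), which are the simple poles of $e^{(1-a)x}/(e^x-z)$; the residue there equals $z^{-a}e^{-2\pi ina}$, using $e^{2\pi in(1-a)}=e^{-2\pi ina}$. These are precisely the denominators and coefficients on the right-hand side, so the claimed identity is the Mittag--Leffler expansion of $e^{(1-a)x}/(e^x-z)$.

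For the first formula I would substitute $u=x-\log z$. Since $e^x=z\,e^u$, a direct computation gives
$$
\frac{e^{(1-a)x}}{e^x-z}=z^{-a}\,\frac{e^{(1-a)u}}{e^u-1},\qquad u:=x-\log z .
$$
Applying the quoted $z=1$ expansion with $a$ replaced by $1-a$ (so that $e^{2\pi in(1-a)}=e^{-2\pi ina}$),
$$
\frac{e^{(1-a)u}}{e^u-1}=\frac1u+\lim_{N\to\infty}\sum_{0\ne n=N}^{-N}\frac{e^{-2\pi ina}}{u-2\pi in},
$$
and absorbing the term $1/u$ as the $n=0$ summand yields exactly the asserted symmetric sum once $u=x-\log z$ is reinstated.

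For the second formula I would simply evaluate the first identity at $x=0$. The left-hand side becomes $1/(1-z)$, while each summand becomes $z^{-a}e^{-2\pi ina}/(-2\pi in-\log z)$; pulling out the overall sign gives $-1/(1-z)=\lim_{N}\sum z^{-a}e^{-2\pi ina}/(2\pi in+\log z)$, which is the claim. Here one must check that $x=0$ is admissible, and this holds because $z\ne1$ keeps $u=-\log z$ away from all poles $2\pi in$ (a pole would force $\log z=-2\pi in$, i.e.\ $z=1$); continuity of both sides at $x=0$ then legitimizes the evaluation.

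The main obstacle is not the algebra but the analytic bookkeeping. The quoted $z=1$ expansion is naturally stated for real $x>0$, whereas the shift $u=x-\log z$ makes the argument complex, with $\Re u=x-\log|z|\ge0$ since $|z|\le1$. I would therefore argue that the expansion persists for complex $u$ off the poles $2\pi in$: pairing the $\pm n$ terms gives numerator $2u\cos(2\pi na)+4\pi n\sin(2\pi na)$ over $u^2+4\pi^2n^2$, so the symmetric partial sums converge (conditionally, like a Fourier series) and define an analytic function agreeing with $e^{(1-a)u}/(e^u-1)-1/u$ on the real axis, hence everywhere off the poles by analytic continuation. The delicate point is maintaining a single consistent branch of $\log z$ in both $z^{-a}$ and the shift, especially in the boundary case $|z|=1$, $z\ne1$, where $u=-\log z$ lies on the imaginary axis but strictly between two consecutive poles.
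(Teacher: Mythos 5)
Your proof is correct, but it takes a genuinely different route from the paper. The paper proves the first identity directly as a Mittag--Leffler expansion: it integrates $\frac{e^{(1-a)y}}{e^y-z}\,\frac{dy}{y-x}$ over expanding squares $C_N$ with corners $\pm R\pm iR$, $R=2\pi(N+1/2)$, applies the residue theorem (the residues at $y=2\pi in+\log z$ being exactly $z^{-a}e^{-2\pi ina}$, as you also compute), and shows the contour integral vanishes as $N\to\infty$; the second identity is then obtained, as in your argument, by letting $x\to 0$. You instead reduce to the $z=1$ partial-fraction expansion already quoted in the paper via the substitution $u=x-\log z$, using $e^{(1-a)x}/(e^x-z)=z^{-a}e^{(1-a)u}/(e^u-1)$ and the replacement $a\mapsto 1-a$, and absorb $1/u$ as the $n=0$ term. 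This buys economy --- no new contour estimate is needed for general $z$ --- at the price of the analytic bookkeeping you correctly identify: the quoted expansion must be shown to hold at the complex argument $u=x-\log z$, and a consistent branch of $\log z$ must be used in both the shift and $z^{-a}$. The one spot to tighten is your continuation step: pointwise conditional convergence of the symmetric partial sums is not enough to conclude that the limit is analytic, so you should note that after subtracting the elementary series $\sum_n \sin(2\pi na)/(\pi n)$ the paired terms become $O(n^{-2})$ locally uniformly on compacta avoiding the poles; then the limit is analytic and the identity theorem applies. With that line added, your argument is complete, and your verification that $u=-\log z$ is never a pole when $z\ne 1$ correctly legitimizes the evaluation at $x=0$.
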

\begin{proof}
The function $e^{(1-a)y}(e^y-z)^{-1}$ has poles at $y = 2\pi in + \log z$ with $n \in {\mathbb{Z}}$ and all of them are of order $1$. The residue at $y = 2\pi in + \log z$ is expressed as
$$
\lim_{y \to 2\pi in + \log z} (y-2\pi in - \log z) \frac{e^{(1-a)y}}{e^y-z} = \frac{e^{(1-a)(2\pi in + \log z)}}{z} = z^{-a}e^{-2 \pi ina}.
$$
Let $N$ be a sufficiently large natural number and $C_N$ be a square path passing through four corner points $R+iR$, $-R+iR$, $-R-iR$ and $R-iR$, where $R=2\pi (N+1/2)$, in this order. If $x$ is a point inside $C_N$ such that $x \ne 2\pi in + \log z$, one has
$$
\int_{C_N} \frac{e^{(1-a)y}}{e^y-z} \frac{dy}{y-x} = 2\pi i \biggl( \frac{e^{(1-a)x}}{e^x-z} - \sum_{n=N}^{-N} \frac{z^{-a}e^{-2 \pi ina}}{x-2\pi in -\log z} \biggr)
$$
from the residue theorem. When $N \to \infty$, the left hand side tends to $0$. Hence we have the first equation of this lemma (see also the proof of \cite[Theorem 4.11]{AIK}). We obtain the second formula of Lemma \ref{lem:zne1} by taking $x \to 0$. 
\end{proof}

From the lemma above, it holds that
\begin{equation}\label{eq:3.1n}
\begin{split}
&\frac{e^{(1-a)x}}{e^x-z} - \frac{1}{1-z} = \lim_{N\to \infty}\sum_{n=N}^{-N} 
\biggl( \frac{z^{-a}e^{-2 \pi ina}}{x-2\pi in-\log z} + \frac{z^{-a}e^{-2 \pi ina}}{2\pi in+\log z} \biggr) \\&=
\sum_{n=\infty}^{-\infty} \frac{x z^{-a}e^{-2 \pi ina}}{(2\pi in+\log z)(x-2\pi in-\log z)}.
\end{split}
\end{equation}
Furthermore, we have
\begin{equation}\label{eq:mhlt}
\int_0^\infty \frac{x^s dx}{x-2\pi in-\log z} = \frac{2 \pi i}{1-e^{2 \pi is}} (2\pi in+\log z)^s
\end{equation}
by modifying the proof of Lemma \ref{lem:mt}.

\begin{proof}[Proof of (\ref{eq:hueq2})]
When $0<a<1$, $z \ne 1$ and $-1 < \Re (s) <0$, we have
\begin{equation*}
\begin{split}
&\Phi (s,a,z) = \frac{1}{\Gamma (s)} \int_0^\infty \biggl( \frac{e^{(1-a)x}}{e^x-z} - \frac{1}{1-z} \biggr) 
x^{s-1} dx \\ &= \frac{1}{\Gamma (s)} \int_0^\infty 
\sum_{n=\infty}^{-\infty} \frac{x z^{-a}e^{-2 \pi ina} dx}{(2\pi in+\log z)(x-2\pi in-\log z)} = 
\frac{1}{\Gamma (s)} \frac{2 \pi i}{e^{2 \pi is}-1} \sum_{n=\infty}^{-\infty} 
\frac{-z^{-a}e^{-2 \pi ina}}{(2\pi in+\log z)^{1-s}} \\ &=
\frac{\Gamma (1-s)}{e^{ \pi is}} \sum_{n=\infty}^{-\infty} \frac{-z^{-a}e^{2 \pi ina}}{(-2\pi in+\log z)^{1-s}}
= z^{-a} \Gamma (1-s) \sum_{n=\infty}^{-\infty} \frac{e^{2 \pi ina}}{(2\pi in-\log z)^{1-s}}
\end{split}
\end{equation*}
from (\ref{eq:hlzinrp2}), (\ref{eq:3.1n}) and (\ref{eq:mhlt}). Thus we have (\ref{eq:hueq2}).
\end{proof}

 

\begin{thebibliography}{1}
\bibitem{AIK}
T.~Arakawa, T.~Ibukiyama and M.~Kaneko, {\it{Bernoulli numbers and zeta functions}}. {\it{With an appendix by Don Zagier}}. Springer Monographs in Mathematics. Springer, Tokyo, 2014. 

\bibitem{Apo} 
T.~M.~Apostol, \textit{Introduction to Analytic Number Theory}. Undergraduate Texts in Mathematics, Springer, New York, 1976.

\bibitem{Ber} 
B.~C.~Berndt, {\it{On the Hurwitz zeta-function}}. Rocky Mountain J.~Math. {\bf{2}} (1972), no.~1, 151--157. 

\bibitem{Er}
A. Erd\'elyi, W. Magnus , F. Oberhettinger and F. G. Tricomi, {\it{Higher transcendental functions Vol 1}}. McGraw-Hill, New York (1953).

\bibitem{GauLa}
R.~Garunk\v{s}tis and A.~Laurin\v{c}ikas, {\it{On zeros of the Lerch zeta-function}}. Number theory and its applications (Kyoto, 1997), 129--143, Dev.~Math. 2, Kluwer Acad.~Publ., Dordrecht, 1999. 

\bibitem{KKY}
S.~Kanemitsu, M.~Katsurada and M.~Yoshimoto, {\it{On the Hurwitz-Lerch zeta-function}}. Aequationes Math. {\bf{59}} (2000), no.~1-2, 1--19. 

\bibitem{KR}
M.~Knopp and S.~Robins, {\it{Easy proofs of Riemann's functional equation for $\zeta (s)$ and of Lipschitz summation}}. Proc.~Amer.~Math.~Soc. {\bf{129}} (2001), no.~7, 1915--1922. 

\bibitem{NaC}
T.~Nakamura, {\it{Real zeros of Hurwitz-Lerch zeta and Hurwitz-Lerch type of Euler-Zagier double zeta functions}}. to appear in Mathematical Proceedings of the Cambridge Philosophical Society. 

\bibitem{NRV}
L.~M.~Navas, F.~J.~Ruiz and J.~L.~Varona, {\it{The Lerch transcendent from the point of view of Fourier analysis}}. J.~Math.~Anal.~Appl. {\bf{431}} (2015),  no.~1, 186--201. 

\bibitem{OS}
C.~O'Sullivan {\it{Zeros of the dilogarithm}}. arXiv:1507.07980.

\bibitem{Pey}
A.~Peyerimhoff, {\it{On the zeros of power series}}. Michigan Math.~J. {\bf{13}} (1966) 193--214. 

\bibitem{Roy}
\'E.~L.~Roy. {\it{Sur les s\'eries divergentes et les fonctions d\'efinies par un d\'eveloppement de Taylor}}, Ann.~Fac.~Sci.~Toulouse Sci.~Math.~Sci.~Phys. (2), 2(3), (1900), 317--384.

\bibitem{Schi}
D.~Schipani, {\it{Zeros of the Hurwitz zeta function in the interval}} $(0,1)$, J.~Comb.~Number Theory {\bf{3}}  (2011),  no.~1, 71--74, (arXiv:1003.2060v3). 

\bibitem{Sie}
C.~L.~Siegel, {\it{Uber die Klassenzahl quadratischer Zahlkorper}}, Acta Arith. {\bf{1}} (1936), 83--86.

\bibitem{Spira}
R.~Spira, {\it{Zeros of Hurwitz zeta functions}}, Math.~Comp. {\bf{30}} (1976), no.~136, 863--866. 

\bibitem{Tit} E.~C.~Titchmarsh, {\it{The theory of the Riemann zeta-function,}} Second edition. Edited and with a preface by D.~R.~Heath-Brown. The Clarendon Press, Oxford University Press, New York, 1986. 

\bibitem{VW}
A.~P.~Veselov and J.~P.~Ward, {\it{On the real zeroes of the Hurwitz zeta-function and Bernoulli polynomials}}.
J.~Math.~Anal.~Appl. {\bf{305}} (2005), no.~2, 712--721. 

\end{thebibliography}
\end{document}